\theoremstyle{plain}
\newtheorem{thm}{Theorem}[section]
\newtheorem{thmr*}{Theorem}[section]
\newtheorem{prop}[thm]{Proposition}
\newtheorem{lemma}[thm]{Lemma}
\newtheorem{cor}[thm]{Corollary}
\newtheorem{remark}[thm]{Remark}
\newtheorem{defn}[thm]{Definition}
\newcommand{\bb}[1]{\mathbb{#1}}
\newcommand{\cl}[1]{\mathcal{#1}}
\begin{document}

\title{Some New Equivalences of Anderson's Paving Conjectures}
\author[V. I. Paulsen]{Vern I. Paulsen}
\address{Dept. of Mathematics, University of Houston, Houston, TX, 77204}
\email{vern@math.uh.edu}

\author[M. Raghupathi]{Mrinal Raghupathi}
\address{Dept. of Mathematics, University of Houston, Houston, TX, 77204}
\email{mrinal@math.uh.edu}
\date{\today}
\thanks{This research was supported in part by NSF grant
  DMS-0600191. Portions of this research were begun while the first
  author was a guest of the American Institute of Mathematics.}
\subjclass[2000]{Primary 46L15; Secondary 47L25}

\begin{abstract}
Anderson's paving conjectures are known to be equivalent to the
Kadison-Singer problem. We prove some new equivalences of Anderson's
conjectures that require the paving of smaller sets of matrices. We
prove that if the strictly upper triangular operatorss are pavable, then every
0 diagonal operator is pavable. This result follows from a new paving
condition for positive operators. In addition, we prove that if the
upper triangular Toeplitz operators are paveable, then all Toeplitz
operators are paveable.
\end{abstract}

\maketitle

\section{Introduction}
Anderson\cite{An1, An2, An3} proved various "paving" conjectures that he showed are equivalent to the Kadison-Singer conjecture\cite{KS} about whether or not pure states on a discrete MASA in B(H) extend uniquely to states on B(H).  Recently, some research has focused on finding smaller sets of operators for which it is sufficient to establish Anderson's paving conjecture. In particular, the question arose as to if paving was known to hold for all strictly uppertriangular operators, then could it be shown to hold for all operators?
It turns out that the answer to this last question is affirmative as
was first shown in \cite{CT}. We are able to present a much shorter
proof of this fact. In fact, the result follows as a fairly easy consequence of recent work of Blecher and Labuschange \cite{BL} on boundary representations for logmodular algebras.  Perhaps even more interesting, is that the key element of the proof of the theorem of \cite{BL} is a technique of Hoffman \cite{KH2} for proving uniqueness of representing measures on uniform algebras. Hoffman's idea readily adapts to states on C*-algebras, which in turn leads to some new paving results.
In the process of proving these new paving conditions, we provide our
own proofs of some of Anderson's results.

\section{Paving Results}

Throughout this section we fix real numbers, $0 < a < 1 < b$ and given a unital C*-algebra $\cl B$, we let $\cl P[a,b]$ denote the closed, convex set of positive elements of $\cl B$ such that $aI \le P \le bI.$
The following result is based on an idea of Hoffman\cite{KH2}.

\begin{thm} \label{u1} Let $\cl B$ be a unital C*-algebra and let $s_i: \cl B \to \bb C, i=1,2$ be states. Then the following are equivalent:
\begin{itemize}
\item[(i)] $s_1=s_2,$
\item[(ii)] for every positive, invertible $p \in \cl B, s_1(p)s_2(p^{-1}) \ge 1,$
\item[(iii)] for every $p \in \cl P[a,b], s_1(p)s_2(p^{-1}) \ge 1.$
\end{itemize}
\end{thm}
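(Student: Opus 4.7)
My plan is to establish the cycle (i)$\Rightarrow$(ii)$\Rightarrow$(iii)$\Rightarrow$(i). The first two implications are essentially free, and the content of the theorem lies entirely in (iii)$\Rightarrow$(i), which is where Hoffman's trick enters.

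For (i)$\Rightarrow$(ii), let $s=s_1=s_2$ and apply the Cauchy--Schwarz inequality for states to $p^{1/2}$ and $p^{-1/2}$:
\[
1 = s(I)^2 = |s(p^{1/2}\cdot p^{-1/2})|^2 \le s(p)\,s(p^{-1}).
\]
The implication (ii)$\Rightarrow$(iii) is immediate since every $p\in\cl P[a,b]$ is positive and invertible.

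For the substantive direction (iii)$\Rightarrow$(i), the idea is to perturb the identity by a small self-adjoint element and read off the first-order term. Fix a self-adjoint $h\in\cl B$ with $\|h\|\le 1$, and for real $t$ with $|t|\le \delta:=\min(1-a,b-1)$ set $p_t = I+th$. Then $(1-|t|)I\le p_t\le(1+|t|)I$, so $p_t\in\cl P[a,b]$. The Neumann series gives $p_t^{-1} = I - th + t^2 h^2 - \cdots$, with the tail bounded by a constant times $t^2$ in norm (uniformly in $h$ with $\|h\|\le 1$ and $|t|\le \delta/2$, say). Applying $s_1$ and $s_2$ and multiplying out, hypothesis (iii) yields
\[
1 \le s_1(p_t)\,s_2(p_t^{-1}) = \bigl(1+t s_1(h)\bigr)\bigl(1-t s_2(h)+O(t^2)\bigr) = 1 + t\bigl(s_1(h)-s_2(h)\bigr) + O(t^2).
\]
Dividing by $t$ and letting $t\to 0^+$ gives $s_1(h)\ge s_2(h)$; dividing by $t$ and letting $t\to 0^-$ (the same inequality, now with $t<0$) gives $s_1(h)\le s_2(h)$. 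Hence $s_1(h)=s_2(h)$ for every self-adjoint $h$, and by writing an arbitrary element as $h_1+ih_2$ with $h_j$ self-adjoint and using linearity, $s_1=s_2$.

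The only real subtlety is bookkeeping for the $O(t^2)$ term in the Neumann expansion, which is routine given the uniform operator-norm bound on $\sum_{k\ge 2}(th)^k$. I do not anticipate a genuine obstacle; the crux is simply recognizing that evaluating the hypothesis along the two one-parameter families $p_t$ for $t>0$ and $t<0$ pins down $s_1-s_2$ from both sides.
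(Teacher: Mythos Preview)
Your proof is correct and follows essentially the same strategy as the paper's. For (i)$\Rightarrow$(ii) the paper uses a discriminant argument applied to $s\bigl((tq+q^{-1})^2\bigr)$ rather than Cauchy--Schwarz, and for (iii)$\Rightarrow$(i) it runs Hoffman's trick along the family $e^{th}$ (using that $f(0)=1$ is a minimum, hence $f'(0)=0$) instead of your family $I+th$ with the two one-sided limits; these are cosmetic differences and the underlying idea is identical.
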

\begin{proof} To see that (i) implies (ii), it is sufficient that if $s$ is a state and $q$ is positive and invertible, then $s(q)s(q^{-1}) \ge 1.$ To see this, note that for any real number $t, 0 \le s((tq + q^{-1})^2) = t^2s(q^2) +2 + s(q^{-2})$ and thus this quadratic has no roots or a repeated root, from which 
$1 \le s(q^2)s(q^{-2})$ follows. Now choose, $q^2=p.$

Clearly, (ii) implies (iii). To see that (iii) implies (i), let $h=h^* \in \cl B$, so that for $t$ real and near 0, $e^{th} \in \cl P[a,b]$  Hence, $f(t)= s_1(e^{th})s_2(e^{-th}) \ge 1$ for $t$ in some neighborhood of 0. Since, $f(0)=0,$ we have that $0=f^{\prime}(0) = s_1(h)- s_2(h).$ Since $h=h^*$ was arbitrary, $s_1=s_2.$
\end{proof}

Let $\cl S \subseteq \cl B$ be an {\bf operator system}, i.e., a
subspace that contains the identity and satisfies, $X \in \cl S$
implies $X^* \in \cl S.$ Given a state on $\cl S$, i.e., a unital, positive, linear functional $s: \cl S \to \bb C,$ and $h=h^* \in \cl B,$ we define
$$\ell_s(h) = \sup \{ s(k) : k \le h, k \in \cl S \}$$
and
$$u_s(h) = \inf \{ s(k) : h \le k, k \in \cl S \}.$$

\begin{prop}\label{ext} Let $\cl B$ be a unital C*-algebra, let $\cl S \subseteq
  \cl B$ be an operator system, let $s: \cl S \to \bb C$ be a state
  and let $h=h^* \in \cl B.$ Then for every $t, \ell_s(h) \le t \le
  u_s(h),$ there exists a state, $s_t:\cl B \to \bb C$ extending $s$
  such that $s_t(h) = t.$
\end{prop}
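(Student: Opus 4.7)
The plan is a two-step Hahn--Banach type argument. First I would extend $s$ from $\cl S$ to the operator system $\cl S + \bb C h$ by prescribing the value $t$ at $h$; then I would invoke the standard fact that any state on an operator subsystem of a unital C*-algebra extends to a state on the whole algebra, producing the desired $s_t$.

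For the first step, observe that if $h \in \cl S$, then taking $k = h$ in both the supremum and infimum defining $\ell_s(h)$ and $u_s(h)$ forces $\ell_s(h) = u_s(h) = s(h)$; hence $t = s(h)$ and one may simply take $s_t$ to be any state extension of $s$ to $\cl B$. So assume $h \notin \cl S$. Then every element of $\cl S + \bb C h$ has a unique decomposition $k + \alpha h$ with $k \in \cl S$ and $\alpha \in \bb C$, so the formula
$$\tilde s(k + \alpha h) := s(k) + \alpha t$$
defines a self-adjoint linear functional on $\cl S + \bb C h$ extending $s$ with $\tilde s(I) = 1$ and $\tilde s(h) = t$.

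The heart of the argument is verifying that $\tilde s$ is positive. A positive element of $\cl S + \bb C h$ has the form $k + \alpha h$ with $k = k^* \in \cl S$ and $\alpha \in \bb R$, since self-adjointness together with uniqueness of the decomposition forces these conditions. If $\alpha = 0$, positivity of $s$ gives $\tilde s(k) = s(k) \ge 0$. If $\alpha > 0$, then $-k/\alpha \le h$ with $-k/\alpha$ a self-adjoint element of $\cl S$, so $-s(k)/\alpha = s(-k/\alpha) \le \ell_s(h) \le t$, whence $s(k) + \alpha t \ge 0$. When $\alpha < 0$ the argument is symmetric: from $h \le -k/\alpha$ one gets $t \le u_s(h) \le -s(k)/\alpha$, and multiplying by the negative scalar $\alpha$ reverses the inequality to give again $s(k) + \alpha t \ge 0$. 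Thus $\tilde s$ is a state on $\cl S + \bb C h$.

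Finally, the extension of $\tilde s$ to a state $s_t$ on all of $\cl B$ is standard and follows from the Krein extension theorem for states on operator systems (itself a consequence of Hahn--Banach). The only non-routine step is the positivity check above, in which the hypothesis $\ell_s(h) \le t \le u_s(h)$ is used exactly once on each side.
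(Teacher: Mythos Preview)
Your proof is correct and follows essentially the same approach as the paper: extend $s$ to $\cl S + \bb C h$ by prescribing $\tilde s(h)=t$, verify positivity via the case split on the sign of the coefficient of $h$ using $\ell_s(h)\le t\le u_s(h)$, and then extend to $\cl B$ by the standard unital-contractive/Hahn--Banach argument. The only differences are cosmetic: you treat the cases $h\in\cl S$ and $\alpha=0$ explicitly, and you phrase the final extension via Krein rather than via ``unital contractive implies positive,'' but these are equivalent routine steps.
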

\begin{proof} We may assume that $h \notin \cl S.$ Let $\cl T$ be the operator system spanned by $\cl S$
  and $h$, i.e., $\cl T = \{ a+ \lambda h: a \in \cl S, \lambda \in
  \bb C \}$ and define $f: \cl T \to \bb C$ by $f(a+ \lambda h) = s(a)
  + \lambda t.$

Note that if $a + \lambda h \ge 0,$ then $a=a^*$ and $\lambda \in \bb
R.$
If $\lambda >0,$ then $h \ge - \lambda^{-1}a$ and hence, $t \ge
\ell_s(h) \ge s(-\lambda^{-1}a),$ from which it follows that $f(a+ \lambda h) \ge 0.$
Similarly, if $\lambda <0,$ then $- \lambda^{-1}a \ge h,$ and
$s(-\lambda^{-1}a) \ge u_s(h) \ge t,$ from which it follows that $f(a+
\lambda h) \ge 0.$

Thus, $f$ is a state on $\cl T.$ But a state on an operator system is a contractive linear
functional and hence by the Hahn-Banach theorem $f$ can be extended to a
contractive linear functional $s_t$ on $\cl B$. But since $s_t$ is
unital and contractive, it is a state.
\end{proof}

\begin{thm} \label{u2} Let $\cl B$ be a unital C*-algebra, let $\cl S \subseteq \cl B$ be an operator
  system and let $s: \cl S \to \bb C$ be a state. Then the following are equivalent:
\begin{itemize}
\item[(i)] $s$ extends uniquely to a state on $\cl B,$
\item[(ii)] for every $h=h^* \in \cl B, \ell_s(h) = u_s(h),$
\item[(iii)] for every positive invertible $p \in \cl B, \ell_s(p) \ell_s(p^{-1}) \ge 1,$
\item[(iv)] for every $p \in \cl P[a,b], \ell_s(p) \ell_s(p^{-1}) \ge 1.$
\end{itemize}
\end{thm}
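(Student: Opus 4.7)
The plan is to establish the four equivalences in a cycle (i) $\Rightarrow$ (ii) $\Rightarrow$ (iii) $\Rightarrow$ (iv) $\Rightarrow$ (i), leveraging Proposition \ref{ext} for the first implication and Theorem \ref{u1} as the crucial tool for the last one. The statement (iii) $\Rightarrow$ (iv) is immediate from the inclusion $\cl P[a,b] \subseteq \{\text{positive invertibles}\}$, so three substantive steps remain.

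For (i) $\Leftrightarrow$ (ii), I would invoke Proposition \ref{ext}: given any self-adjoint $h \in \cl B$, every real number in $[\ell_s(h), u_s(h)]$ arises as $\tilde s(h)$ for some state extension $\tilde s$. Therefore $s$ has a unique extension exactly when this interval degenerates to a point for every self-adjoint $h$, which is precisely (ii). A minor remark is needed that self-adjoint elements determine a state by complex linearity.

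For (ii) $\Rightarrow$ (iii), observe that (i) already gives a unique extension $\tilde s : \cl B \to \bb C$, and for any self-adjoint element $h$ we have $\tilde s(h) = \ell_s(h)$ (this is the common value forced by (ii)). Applied to the self-adjoint elements $p$ and $p^{-1}$ for positive invertible $p$, this yields $\ell_s(p) = \tilde s(p)$ and $\ell_s(p^{-1}) = \tilde s(p^{-1})$. Now Theorem \ref{u1} with $s_1 = s_2 = \tilde s$ gives $\tilde s(p)\tilde s(p^{-1}) \geq 1$, which is exactly (iii).

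The main step is (iv) $\Rightarrow$ (i), and the key observation is that for any state extension $\tilde s$ of $s$, and any self-adjoint $h \in \cl B$, we automatically have $\tilde s(h) \geq \ell_s(h)$: if $k \in \cl S$ satisfies $k \leq h$, then $\tilde s(h) \geq \tilde s(k) = s(k)$. Now suppose $s_1, s_2$ are two state extensions. For any $p \in \cl P[a,b]$, both $p$ and $p^{-1}$ lie in $\cl P[1/b, 1/a]$ (so in particular $p^{-1}$ is self-adjoint and lies in a bounded positive set of the same form), so
\[ s_1(p)\, s_2(p^{-1}) \;\geq\; \ell_s(p)\, \ell_s(p^{-1}) \;\geq\; 1 \]
by (iv). Theorem \ref{u1} then forces $s_1 = s_2$, closing the cycle. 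I expect no real obstacle here, provided one is careful that assumption (iv) is stated only for the slab $\cl P[a,b]$ rather than all positive invertibles; the rescaling $p \mapsto \lambda p$ with appropriate $\lambda > 0$ and the fact that $\ell_s$ is positively homogeneous handles the reduction if needed, though the inequality $s_1(p)s_2(p^{-1}) \geq 1$ is itself scale-invariant so no rescaling is actually necessary to invoke Theorem \ref{u1}(iii).
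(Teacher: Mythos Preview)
Your proposal is correct and follows essentially the same route as the paper: the equivalence (i)$\Leftrightarrow$(ii) via Proposition~\ref{ext}, then (i)$\Rightarrow$(iii) using $\ell_s(h)=\tilde s(h)$ for the unique extension $\tilde s$ together with Theorem~\ref{u1}, the trivial (iii)$\Rightarrow$(iv), and finally (iv)$\Rightarrow$(i) by bounding $s_1(p)s_2(p^{-1})\ge \ell_s(p)\ell_s(p^{-1})\ge 1$ and invoking Theorem~\ref{u1}(iii). Your parenthetical about $p^{-1}\in\cl P[1/b,1/a]$ and the rescaling discussion are unnecessary (all that matters is that $p^{-1}$ is self-adjoint so $\ell_s(p^{-1})$ is defined, and Theorem~\ref{u1}(iii) is already stated for $\cl P[a,b]$), but they do no harm.
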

\begin{proof} The equivalence of (i) and (ii) follows from the above proposition.

Now assuming (i), if we let $s_1: \cl B \to \bb C$ denote the unique state extension of $s$, then by the above proof, necessarily, $s_1(h) = \ell_s(h),$ and hence, $\ell_s(p) \ell_s(p^{-1}) = s_1(p) s_1(p^{-1}) \ge 1.$
Thus, (i) implies (iii).

Clearly, (iii) implies (iv). Assuming (iv), if $s_1,s_2$ are any two state extensions of $s$, then $s_1(p) s_2(p^{-1}) \ge \ell_s(p) \ell_s(p^{-1}) \ge 1$ for all $p \in \cl P[a,b],$ and hence $s_1=s_2,$ and, thus, (i) follows.
\end{proof}

\begin{defn} Given a unital C*-algebra $\cl B,$ an operator system
  $\cl S \subseteq \cl B$ and a state $s: \cl S \to \bb C$ we let $\cl
  U(s) = \{ b \in \cl B: s_1(b)=s_2(b) \}$ where $s_1,s_2$  are
  arbitrary states extending $s.$ We call this set the {\bf uniqueness domain for s.}
\end{defn}

\begin{remark}
It is not hard to see that $\cl U(s)$ is an operator system and that by \ref{ext}, $h=h^* \in \cl U(s)$ if and only if $\ell_s(h) = u_s(h).$
However, given a single positive, invertible $p \in \cl B$, it is not clear if 
 $\ell_s(p)\ell_s(p^{-1}) \ge 1,$ implies that $p \in \cl U(s).$ The above proof does show that if every positive, invertible $q$ in the unital C*-algebra generated by $p$ satisfies $\ell_s(q) \ell_s(q^{-1}) \ge 1,$ then the entire C*-algebra generated by $p$ is contained in $\cl U(s).$
In this sense, the condition $\ell_s(p) \ell_s(p^{-1}) \ge 1,$ is a weaker condition.
\end{remark}

We remark also that \ref{ext} shows that the interval, $[\ell_s(h), u_s(h)]$ is exactly the range of all possible images of $h$ attained by extensions of $s$.
In this sense it is the {\bf interval of non-uniqueness.}

We now turn to the situation of the Kadison-Singer conjecture. To this
end, we let $\cl B= B(\ell^2(\bb N))$ and identify operators $X \in
\cl B$ with their infinite matrices, $X=(x_{i,j}).$ We let $\cl D$
denote the MASA of operators that are diagonal with respect to the
canonical orthonormal basis for $\ell^2(\bb N)$ and let $E:
B(\ell^2(\bb N)) \to \cl D$ denote the conditional expectation onto $\cl D,$ given by $E((x_{i,j}))=(d_{i,j})$ where $d_{i,i}=x_{i,i}$ and $d_{i,j} =0, i \ne j.$ 

We shall freely identify $\cl D$ with the continuous functions on the Stone-Cech compactification of the natural numbers, $\beta \bb N.$ 
In particular, if $A \subseteq \bb N$ we shall let $P_A=(p_{i,j})$
denote the diagonal projection with $p_{i,i}=1$ if and only if $i \in
A.$ Such a projection is identified with the characteristic function
of the closure of $A$ in $\beta \bb N,$ which is a clopen set. 

We shall also make use of the one-to-one, onto correspondence between points in $\beta \bb N$ and {\bf ultrafilters} on $\beta \bb N.$ To recall this correspondence, note that since $\bb N$ is dense in $\beta \bb N,$ every clopen set $U$ is uniquely determined by $U \cap \bb N.$ Given $\omega \in \beta \bb N,$ the collection of subsets of $\bb N$ given by
$$\frak U(\omega) = \{ U \cap \bb N: \omega \in U \},$$
where $U$ denotes an arbitrary clopen neighborhood of $\omega$ is an ultrafilter on $\bb N.$

\begin{lemma} Let $\cl H$ and $\cl K$ be Hilbert spaces and let $H=
  \begin{pmatrix} A & B\\ B^* & C \end{pmatrix} \in B(\cl H \oplus \cl
  K)$ be self-adjoint with $A$ positive and invertible. Then there exists, $\delta >0$ such that $H + \delta P_{\cl K} \ge 0,$ where $P_{\cl K}$ denotes the orthogonal projection onto $\cl K.$
\end{lemma}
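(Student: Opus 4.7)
The plan is to reduce the positivity of $H + \delta P_{\cl K}$ to a single scalar inequality by taking a Schur complement, using the fact that $A$ is invertible. Writing $P_{\cl K}$ as $\begin{pmatrix} 0 & 0 \\ 0 & I_{\cl K} \end{pmatrix}$, the operator in question has the block form
$$H + \delta P_{\cl K} = \begin{pmatrix} A & B \\ B^* & C + \delta I_{\cl K} \end{pmatrix}.$$
Since $A$ is positive and invertible, $A^{-1} \in B(\cl H)$ is bounded, and one has the congruence factorization
$$\begin{pmatrix} A & B \\ B^* & C + \delta I \end{pmatrix} = \begin{pmatrix} I & 0 \\ B^*A^{-1} & I \end{pmatrix} \begin{pmatrix} A & 0 \\ 0 & (C + \delta I) - B^* A^{-1} B \end{pmatrix} \begin{pmatrix} I & A^{-1}B \\ 0 & I \end{pmatrix}.$$
Because the outer factors are mutual adjoints and invertible, $H + \delta P_{\cl K} \ge 0$ if and only if the Schur complement $C + \delta I - B^* A^{-1} B \ge 0$.

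Since $B^* A^{-1} B - C$ is a bounded self-adjoint operator on $\cl K$, it suffices to choose any $\delta \ge \|B^* A^{-1} B - C\|$; then $\delta I_{\cl K} \ge B^* A^{-1} B - C$, giving the desired positivity of the Schur complement and therefore of $H + \delta P_{\cl K}$.

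The only step that warrants care is the Schur factorization itself, which is a routine block-matrix identity verified by direct multiplication; everything else is bookkeeping. There is no real obstacle: the hypothesis that $A$ is \emph{invertible} (not merely positive) is exactly what is needed to form $A^{-1}B$ as a bounded operator and to justify the factorization.
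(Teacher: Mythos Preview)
Your proof is correct. The paper takes a slightly different route: rather than factoring via the Schur complement, it computes the quadratic form $\langle (H+\delta P_{\cl K})(h\oplus k), h\oplus k\rangle$ directly, substitutes $X=A^{-1/2}B$, and completes the square to obtain $(\|A^{1/2}h\|-\|Xk\|)^2 + (\delta-\|C\|-\|X\|^2)\|k\|^2$, which is nonnegative once $\delta\ge\|C\|+\|X\|^2$. The two arguments are of course close cousins---the congruence factorization you wrote down is the operator-level version of completing the square---but they read differently: the paper's approach is a bare-hands inequality on vectors, while yours packages the same idea as a single algebraic identity and reduces everything to positivity of the Schur complement. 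Your version is arguably cleaner and yields a slightly sharper explicit bound, $\delta\ge\|B^*A^{-1}B-C\|$, versus the paper's $\delta\ge\|C\|+\|B^*A^{-1}B\|$ (noting $\|X\|^2=\|B^*A^{-1}B\|$); the paper's version has the virtue of needing no block-matrix identity to verify.
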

\begin{proof} Let $X= A^{-1/2}B,$ then \begin{multline*}\langle
  \begin{pmatrix} A & B\\ B^* & C+\delta I_{\cl K} \end{pmatrix} \begin{pmatrix} h\\k
    \end{pmatrix}, \begin{pmatrix} h\\k \end{pmatrix}\rangle = \\ \langle
    Ah,h \rangle + \langle A^{1/2}Xk,h \rangle + \langle X^*A^{1/2}h,k
    \rangle + \langle Ck,k \rangle + \delta \|k\|^2  \ge \\
\|A^{1/2}h \|^2 - 2 \|Xk\| \|A^{1/2}h \| - \|C\| \|k\|^2 + \delta
\|k\|^2 \ge \\ (\|A^{1/2}h\| - \|Xk\|)^2 + (\delta - \|C\| -
\|X\|^2)\|k\|^2 \ge 0 \end{multline*} provided that we choose $\delta \ge \|C\| +
\|X\|^2.$
\end{proof}

\begin{thm} \label{ui} Let $\omega \in \beta \bb N,$ let $s_{\omega}: \cl D \to \bb C$ be the *-homomorphism given by evaluation at $\omega,$ and let $H=H^* \in B(\ell^2(\bb N)).$ Then $\ell_{s_{\omega}}(H) = u_{s_{\omega}}(H)=t$ if and only if for every $\epsilon > 0$ there exists $A \in \frak U(\omega)$ such that
$(t - \epsilon)P_{A} \le P_AHP_A \le (t+ \epsilon)P_A.$
\end{thm}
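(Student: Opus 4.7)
My plan is to prove the two implications separately; the ``if'' direction is where the real content lies, while the ``only if'' direction is just a continuity argument on $\beta \bb N$.

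For the ``if'' direction, suppose that for every $\epsilon > 0$ there is $A \in \frak U(\omega)$ with $(t-\epsilon) P_A \le P_A H P_A \le (t+\epsilon) P_A$. Fix $\epsilon > 0$ and apply the hypothesis with the tolerance $\epsilon/2$ to obtain such an $A$. The idea is to produce a diagonal operator dominating $H$ whose value at $\omega$ is $t+\epsilon$. Set $K = (t+\epsilon) P_A$ and expand $K-H$ in block form with respect to the decomposition $\ell^2(\bb N) = \ell^2(A) \oplus \ell^2(A^c)$. The $(1,1)$-block is $(t+\epsilon) I - P_A H P_A \ge (\epsilon/2) I$, which is positive and invertible on $\ell^2(A)$. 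The preceding lemma then yields $\delta > 0$ with $K - H + \delta P_{A^c} \ge 0$, i.e., the diagonal operator $(t+\epsilon) P_A + \delta P_{A^c}$ dominates $H$. Since $A \in \frak U(\omega)$ we have $s_\omega(P_A) = 1$ and $s_\omega(P_{A^c}) = 0$, so this dominating diagonal operator has value $t+\epsilon$ at $\omega$, giving $u_{s_\omega}(H) \le t + \epsilon$. The symmetric construction applied to the lower inequality (or applied to $-H$) produces a diagonal operator dominated by $H$ of value $t-\epsilon$ at $\omega$, so $\ell_{s_\omega}(H) \ge t - \epsilon$. Letting $\epsilon \to 0$ and using the trivial inequality $\ell_{s_\omega}(H) \le u_{s_\omega}(H)$ completes this direction.

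For the ``only if'' direction, assume $\ell_{s_\omega}(H) = u_{s_\omega}(H) = t$. Given $\epsilon > 0$, pick $K_1, K_2 \in \cl D$ with $K_1 \le H \le K_2$ and $|s_\omega(K_j) - t| < \epsilon/2$. Identifying $\cl D$ with $C(\beta \bb N)$, I would use zero-dimensionality of $\beta \bb N$ to find a single clopen neighborhood $U$ of $\omega$ on which both $K_1 > t - \epsilon$ and $K_2 < t + \epsilon$. Setting $A = U \cap \bb N \in \frak U(\omega)$, the diagonal entries satisfy $(K_1)_{i,i} > t-\epsilon$ and $(K_2)_{i,i} < t+\epsilon$ for every $i \in A$, so $P_A K_1 P_A \ge (t-\epsilon) P_A$ and $P_A K_2 P_A \le (t+\epsilon) P_A$. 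Compressing the operator inequality $K_1 \le H \le K_2$ by $P_A$ preserves order and produces the desired sandwich of $P_A H P_A$.

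I expect the first direction to be the only real obstacle. The technical point is converting a localized inequality on $\ell^2(A)$ into a global operator inequality by a diagonal element, which is exactly what the preceding lemma is engineered for. The subtlety is that the lemma demands the $(1,1)$-block of the block matrix in question to be \emph{positive and invertible}, not merely positive, which is why I would apply the hypothesis at tolerance $\epsilon/2$ rather than $\epsilon$ in order to secure a uniform spectral gap of size $\epsilon/2$ on $\ell^2(A)$ before invoking the lemma.
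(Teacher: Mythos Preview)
Your proof is correct, but the ``if'' direction is argued differently from the paper. The paper does not construct a diagonal dominant at all in that direction: instead it observes that for \emph{any} state $s$ on $B(\ell^2(\bb N))$ extending $s_\omega$, the equality $s(P_A)=1$ forces (via Cauchy--Schwarz) $s(P_AHP_A)=s(H)$, so the sandwich $(t-\epsilon)P_A\le P_AHP_A\le(t+\epsilon)P_A$ immediately gives $|s(H)-t|\le\epsilon$; since this holds for every extension, Proposition~\ref{ext} yields $\ell_{s_\omega}(H)=u_{s_\omega}(H)=t$. Your route is instead to manufacture, from the compressed inequality, an honest diagonal operator dominating $H$ with value $t+\epsilon$ at $\omega$, and for this you correctly invoke the preceding block-matrix lemma (and correctly apply the hypothesis at tolerance $\epsilon/2$ to secure the invertible $(1,1)$-block the lemma requires). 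The paper's argument is shorter and avoids the lemma entirely in this direction; your argument is more constructive and stays at the level of operator inequalities, avoiding both Proposition~\ref{ext} and the Cauchy--Schwarz step. Either way is fine. Your ``only if'' direction matches the paper's essentially verbatim.
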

\begin{proof} If $s$ is any state that extends $s_{\omega},$ then $s(P_A)=1$ and so $s(P_AXP_A)=s(X).$ Thus, if the second condition holds, then $t - \epsilon \le s(P_AHP_A) \le t+ \epsilon$ and hence, $s(H)=t$ for every state extension. Thus, by \ref{ext}, the first condition holds.

Conversely, if the first condition holds, then given $\epsilon >0$, there exists $D_1,D_2 \in \cl D$ with $D_1 \le H \le D_2$ such that $t - \epsilon \le s_{\omega}(D_1) \le s_{\omega}(D_2) \le t+ \epsilon.$ Thus, we may find a neighborhood $U$ of $\omega$ such that the functions $D_i$ are, respectively, greater than $t - \epsilon$ and less than $t + \epsilon$ on $U$. Let $A= U \cap \bb N,$ so that $A \in \frak U(\omega).$ Then by the lemma we may choose values, $\delta_1, \delta_2$ so that $(t- \epsilon)P_A + \delta_1(I-P_A) \le D_1 \le H \le D_2 \le (t+\epsilon)P_A + \delta_2(I-P_A),$ and the result follows by pre and post multiplying this inequality by $P_A$.
\end{proof}

The equivalence of (i) and (v) below, is originally due to
Anderson\cite{An1, An2} and is the basis of his paving results. The proof
that we give shares some key elements with his proof, but we feel is sufficiently different to merit inclusion.

\begin{thm} \label{u3} Let $\omega \in \beta \bb N,$ and let $s_{\omega}: \cl D \to \bb C$ be the *-homomorphism given by evaluation at $\omega.$ Then the following are equivalent:
\begin{itemize}
\item[(i)] $s_{\omega}$ extends uniquely to a state on $B(\ell^2(\bb N)),$
\item[(ii)] for every $H=H^* \in B(\ell^2(\bb N))$ with $E(H)=0, \ell_{s_{\omega}}(H) = 0,$
\item[(iii)] for every positive, invertible $P \in B(\ell^2(\bb N)), \ell_{s_{\omega}}(P) \ell_{s_{\omega}}(P^{-1}) \ge 1,$
\item[(iv)] for every $P \in \cl P[a,b], \ell_{s_{\omega}}(P) \ell_{s_{\omega}}(P^{-1}) \ge 1,$
\item[(v)] for each $H=H^* \in B(\ell^2(\bb N))$ with $E(H)=0,$ and each $\epsilon > 0,$ there exists $A \in \frak U(\omega)$ with $- \epsilon P_A \le P_AHP_A \le + \epsilon P_A,$
\item[(vi)] for each positive, invertible $P \in B(\ell^2(\bb N))$ and each $\epsilon >0,$ there exists $A \in \frak U(\omega)$ and real numbers $c,d >0,$ with $1 - \epsilon < cd,$such that $cP_A \le P_APP_A$ and $dP_A \le P_AP^{-1}P_A,$
\item[(vii)] for each $P \in \cl P[a,b],$ and each $\epsilon > 0,$ there exists $A \in \frak U(\omega)$ and real numbers $c,d > 0,$ with $1 - \epsilon < cd,$ such that $cP_A \le P_APP_A$ and $dP_A \le P_AP^{-1}P_A.$
\end{itemize}
\end{thm}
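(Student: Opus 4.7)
My plan is to layer the proof on top of the two theorems already in hand. First, applying Theorem \ref{u2} with $\cl S = \cl D$ and $s = s_\omega$ immediately gives the equivalence of (i), (iii), and (iv). To handle (i) $\Leftrightarrow$ (ii) $\Leftrightarrow$ (v), I would make the following elementary but crucial observation: whenever $H = H^* \in B(\ell^2(\bb N))$ has $E(H) = 0$, any diagonal $D$ with $D \ge H$ satisfies $D_{ii} = \langle (D - H)e_i, e_i \rangle \ge 0$, so $D \ge 0$ as a diagonal operator and $s_\omega(D) \ge 0$; therefore $u_{s_\omega}(H) \ge 0$, and symmetrically $\ell_{s_\omega}(H) \le 0$. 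Writing a general self-adjoint $h$ as $E(h) + H$ with $E(H) = 0$ and using that $\ell_{s_\omega}$ and $u_{s_\omega}$ are each equivariant under adding a diagonal element, the condition ``$\ell_{s_\omega}(h) = u_{s_\omega}(h)$ for every $h = h^*$'' appearing in Theorem \ref{u2} collapses to ``$\ell_{s_\omega}(H) = 0$ for every zero-diagonal self-adjoint $H$,'' proving (i) $\Leftrightarrow$ (ii). The equivalence (ii) $\Leftrightarrow$ (v) then follows from Theorem \ref{ui} with $t = 0$, since $\ell_{s_\omega}(H) = 0$ automatically forces $u_{s_\omega}(H) = 0$ in this setting.

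The main work is in (iii) $\Leftrightarrow$ (vi); the argument for (iv) $\Leftrightarrow$ (vii) is identical, restricted to $\cl P[a,b]$. For (iii) $\Rightarrow$ (vi), given $\epsilon > 0$ I select diagonals $D_1 \le P$ and $D_2 \le P^{-1}$ with $s_\omega(D_i)$ approximating $\ell_{s_\omega}(P)$ and $\ell_{s_\omega}(P^{-1})$. For small $\delta > 0$, there is a clopen neighborhood $U$ of $\omega$ on which $D_1 > c := s_\omega(D_1) - \delta$ and $D_2 > d := s_\omega(D_2) - \delta$, so $A = U \cap \bb N \in \frak U(\omega)$ satisfies $cP_A \le P_A D_1 P_A \le P_A P P_A$ together with the analogue for $P^{-1}$; the product $cd$ can be made as close to $\ell_{s_\omega}(P)\ell_{s_\omega}(P^{-1}) \ge 1$ as desired. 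For the converse (vi) $\Rightarrow$ (iii), given $A \in \frak U(\omega)$ with $cP_A \le P_A P P_A$, I construct a diagonal $D \le P$ with $s_\omega(D) = c - \eta$ as follows: decompose $\ell^2(\bb N) = P_A \cl H \oplus P_{A^c} \cl H$ and consider $P - (c-\eta)P_A$, whose $(1,1)$-block $P_A P P_A - (c-\eta) P_A \ge \eta P_A$ is strictly positive on $P_A \cl H$. The preceding lemma supplies $\delta > 0$ with $P - (c-\eta) P_A + \delta P_{A^c} \ge 0$, so $D = (c-\eta) P_A - \delta P_{A^c}$ is a diagonal satisfying $D \le P$ and $s_\omega(D) = c - \eta$; hence $\ell_{s_\omega}(P) \ge c - \eta$. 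Applying the same construction to $P^{-1}$ and letting $\eta \to 0$ then $\epsilon \to 0$ gives $\ell_{s_\omega}(P)\ell_{s_\omega}(P^{-1}) \ge 1$.

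The main obstacle I foresee is in (vi) $\Rightarrow$ (iii): converting the paving inequality $cP_A \le P_A P P_A$ on a compression into a global diagonal majorant $D \le P$ requires strict positivity of the upper-left block in order to invoke the preceding lemma. The $\eta$-perturbation above is the natural remedy, and one must check that it does not destroy the target inequality $cd \ge 1$ in the limit, which it doesn't since $\eta$ can be chosen independently of, and much smaller than, $\epsilon$.
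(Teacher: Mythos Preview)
Your proposal is correct and follows essentially the same approach as the paper: the equivalence of (i), (iii), (iv) comes from Theorem~\ref{u2}; the reduction of (ii) to condition (ii) of Theorem~\ref{u2} via the zero-diagonal decomposition is exactly the paper's ``apply to $H-E(H)$ and $E(H)-H$'' step written out in more detail; and your argument for (iii) $\Leftrightarrow$ (vi), including the $\eta$-perturbation to gain strict positivity of the compressed block before invoking the lemma, matches the paper's proof (which phrases the same move as ``slightly perturbing $c$ and $d$, if necessary''). The only cosmetic difference is that you derive (ii) $\Leftrightarrow$ (v) by a direct appeal to Theorem~\ref{ui}, whereas the paper says this equivalence is proved identically to (iii) $\Leftrightarrow$ (vi); both routes are valid.
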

\begin{proof}
The equivalence of (i), (iii) and (iv), follows from the equivalence of (i), (iii) and (iv) in \ref{u2}.

Moreover, condition (ii) above is easily seen to be equivalent to condition (ii) in \ref{u2}, by applying the new condition (ii) to $H- E(H)$ and $E(H) - H.$

We now prove the equivalence of (iii) and (vi). The proof of the equivalence of (ii) with (v) and of (iv) with (vii) is identical.

First to see that (iii) implies (vi), given $\epsilon >0,$ we may
choose $D_1, D_2 \in \cl D$ such that $D_1 \le P, D_2 \le P^{-1}$ and
$1- \epsilon < s_{\omega}(D_1) s_{\omega}(D_2).$ From this it follows
that we may pick $c,d >0$ with $1 - \epsilon < cd$ and a clopen set
$U$ that is a neighborhood of $\omega,$ such that the continuous
functions $D_1$ and $D_2$ are strictly greater than $c$ and $d$, respectively,
on $U.$ For a sufficiently large negative number, $n,$ we will have
that $D_1 -[c \chi_U + n(I - \chi_U)]$ and $D_2 -[d \chi_U + n(I-
\chi_U)],$ are positive and invertible, where $\chi_U$ denotes the characteristic
function of the set $U.$ Let $A= U \cap \bb N,$ so that $P_A= \chi_U,$
then $cP_A= P_A(cP_A + n(I- P_A))P_A \le P_APP_A$ and $dP_A \le
P_A(dP_A + n(I -P_A))P_A \le P_AP^{-1}P_A,$  with $P_APP_A - cP_A$ and
$P_AP^{-1}P_A$ both
positive and invertible. Hence,  (vi) follows by applying the above lemma.

Conversely, assuming (vi), and slightly perturbing $c$ and $d$, if
necessary, we may assume that $P_APP_A-cP_A \ge \delta P_A$ and
$P_AP^{-1}P_A -dP_A \ge \delta P_A$ for some $\delta > 0.$ Hence, we
by applying the lemma twice, we may pick a
sufficiently large negative number, $n$, so that $cP_A + n(I-P_A) \le
P$ and $dP_A + n(I-P_A) \le P^{-1}.$ Hence, $c= s_{\omega}(cP_A +n(I -P_A)) \le \ell_{s_{\omega}}(P),$ and $d= s_{\omega}(dP_A + n(I-P_A)) \le \ell_{s_{\omega}}(P^{-1}),$ from which it follows that $\ell_{s_{\omega}}(P) \ell_{s_{\omega}}(P^{-1}) > 1- \epsilon$ and hence, we have (iii).
\end{proof}

It is customary to say that {\bf the Kadison-Singer conjecture is true} if every pure state on $\cl D,$ i.e., if every state of the form $s_{\omega}$, extends uniquely to $B(\ell^2(\bb N)).$ This can be a bit misleading, since Kadsion and Singer never actually made this conjecture and there is some indication that they might have believed the negation of this statement.

We shall call a finite collection of disjoint subsets, $\{A_1,...A_r \}$ with $\bb N= A_1 \cup ... \cup A_r$ an {\bf r-paving of $\bb N.$} 

The equivalence of (i), (ii) and (iii) below is also in Anderson\cite{An3}. We include them for completeness and because our proof of the equivalence of (i) and (ii) is slightly different.

\begin{thm} \label{ks} The following are equivalent:
\begin{itemize}
\item[(i)] the Kadison-Singer conjecture is true,

\item[(ii)] for each $H= H^* \in B(\ell^2(\bb N))$ with $E(H)=0,$ and each $\epsilon >0,$ there exists an $r$ and an $r$-paving $\{A_1,...,A_r \}$ of $\bb N$ with $ - \epsilon P_{A_i} \le P_{A_i}HP_{A_i} \le + \epsilon P_{A_i},$

\item[(iii)] for each $\epsilon >0,$ there exists an $r$ such that for every $H= H^* \in B(\ell^2(\bb N))$ with $E(H)=0,$ then there exists an $r$-paving $\{A_1,...,A_r \}$ of $\bb N$ with $\| P_{A_i}HP_{A_i} \| \le + \epsilon \|H\|, i=1,...,r,$

\item[(iv)] for each positive invertible, $P \in B(\ell^2(\bb N))$ and each $\epsilon >0,$ there exists an $r,$ an $r$-paving $\{A_1,...,A_r \}$ of $\bb N$ and positive real numbers, $c_1,...c_r, d_1,...,d_r$ with $c_id_i > 1 - \epsilon,$ such that $c_iP_{A_i} \le P_{A_i}PP_{A_i}$ and $d_iP_{A_i} \le P_{A_i}P^{-1}P_{A_i}$ for $i=1,...,r,$

\item[(v)] for each positive invertible, $P \in \cl P[a,b]$ and each $\epsilon >0,$ there exists an $r,$ an $r$-paving $\{A_1,...,A_r \}$ of $\bb N$ and positive real numbers, $c_1,...c_r, d_1,...,d_r$ with $c_id_i > 1 - \epsilon,$ such that $c_iP_{A_i} \le P_{A_i}PP_{A_i}$ and $d_iP_{A_i} \le P_{A_i}P^{-1}P_{A_i}$ for $i=1,...,r,$

\item[(vi)] for each $\epsilon >0,$ there exists an $r$, such that for every $P \in \cl P[a,b]$ there is an $r$-paving $\{A_1,...,A_r \}$ and positive real numbers, $c_1,...,c_r,d_1,...,d_r,$ with $c_id_i > 1 - \epsilon$ such that $c_iP_{A_i} \le P_{A_i}PP_{A_i}$ and $d_iP_{A_i} \le P_{A_i}P^{-1}P_{A_i},$ for $i= 1,...,r.$

\end{itemize}
\end{thm}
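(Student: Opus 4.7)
The plan is to reduce every condition to its analog in Theorem \ref{u3} at each ultrafilter $\omega \in \beta\bb N$, then use the compactness of $\beta\bb N$ to convert a local witness $A_\omega \in \frak U(\omega)$ at each $\omega$ into a finite paving of $\bb N$. The uniform-$r$ statements (iii) and (vi) will then be extracted from (ii) and (v) via a block-diagonal construction.

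I would begin by establishing (i) $\Leftrightarrow$ (ii), (i) $\Leftrightarrow$ (iv), (i) $\Leftrightarrow$ (v) by a single compactness template; the equivalence (iv) $\Leftrightarrow$ (v) then follows by concatenation. Assume (i) and fix $\epsilon > 0$ together with the relevant operator. For each $\omega \in \beta\bb N$, the appropriate clause of \ref{u3} --- namely (v), (vi), or (vii) --- provides a set $A_\omega \in \frak U(\omega)$ satisfying the required local inequality. Each closure $U_\omega = \overline{A_\omega}$ is a clopen neighborhood of $\omega$, so $\{U_\omega\}_{\omega \in \beta\bb N}$ is an open cover of the compact space $\beta\bb N$. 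Extract a finite subcover $U_1,\ldots,U_r$, disjointify by $V_i = U_i \setminus \bigcup_{j<i} U_j$ (each $V_i$ clopen by total disconnectedness), and set $A_i = V_i \cap \bb N$. Then $\{A_1,\ldots,A_r\}$ is an $r$-paving of $\bb N$, and since $A_i \subseteq U_i \cap \bb N$, the local inequality at $U_i$ survives conjugation by $P_{A_i}$, yielding (ii), (iv), or (v). For the converse, given any $\omega \in \beta\bb N$ and a paving as in (ii), (iv), or (v), exactly one of the disjoint paving sets lies in $\frak U(\omega)$ and supplies the witness in the corresponding clause of \ref{u3}, so (i) holds.

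For the uniform-$r$ direction, (iii) $\Rightarrow$ (ii) and (vi) $\Rightarrow$ (v) are immediate. For the nontrivial implication (ii) $\Rightarrow$ (iii), I would argue by contradiction: if some $\epsilon > 0$ admits no uniform $r$, pick for each $r \in \bb N$ a witness $H_r = H_r^*$ with $\|H_r\| \le 1$, $E(H_r)=0$, and no $r$-paving providing $\epsilon$-control. Fix a bijection $\bb N \cong \bb N \times \bb N$ and form the block-diagonal operator $H = \bigoplus_r H_r$ on the identified $\ell^2(\bb N)$; then $\|H\| \le 1$ and $E(H) = 0$. By (ii), $H$ admits some $r_0$-paving $\{B_1,\ldots,B_{r_0}\}$ with $\epsilon$-control. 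For any $r > r_0$, intersecting the $B_i$ with the $r$-th block $\bb N \times \{r\}$ yields an $r_0$-paving of the $r$-th copy of $\bb N$ that $\epsilon$-controls $H_r$, contradicting the choice of $H_r$. The identical block-diagonal argument handles (v) $\Rightarrow$ (vi); here the point is that $\cl P[a,b]$ is closed under direct sums, inversion commutes with direct sums, and the constants $c_i,d_i$ from the ambient paving transfer without change to each block.

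The main obstacle is less conceptual than bookkeeping: in each of the three settings one must verify that the relevant local inequality at $A$ passes to an arbitrary subset $A' \subseteq A$. For the scalar condition $-\epsilon P_A \le P_A H P_A \le \epsilon P_A$ this is a direct sandwich by $P_{A'}$; for the product-form conditions $cP_A \le P_A P P_A$ and $dP_A \le P_A P^{-1} P_A$ the same sandwich works once one uses $P_{A'} P_A = P_{A'}$, but the check has to be repeated. Once this preservation is in hand and the direct-sum construction is verified to stay in the correct class, the theorem follows.
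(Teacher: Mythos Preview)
Your proposal is correct and follows essentially the same approach as the paper: the compactness of $\beta\bb N$ converts the local witnesses from Theorem~\ref{u3} into a finite paving, and a block-diagonal direct sum yields the uniform-$r$ statements (iii) and (vi). The only cosmetic difference is that the paper disjointifies by taking the atoms of the finite Boolean algebra generated by the cover, whereas you use the sequential $V_i = U_i \setminus \bigcup_{j<i} U_j$; both produce a paving by subsets of the original cover sets, which is all that is needed.
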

\begin{proof} The proofs of the equivalence of (i) with each of  (ii), (iv)
  and (v) are essentially the same. One notes that by \ref{u3}, uniqueness of the extension for each $\omega$ yields a set $A_{\omega}$ which corresponds to a clopen neighborhood $U_{\omega}$ of $\omega$ in $\beta \bb N.$ But $\beta \bb N$ is compact so that some finite subcollection $\{U_1,...U_t \}$ of these sets covers $\beta \bb N$ and consequently, $B_i= U_i \cap \bb N$ covers $\bb N.$ Now let $\{A_1,...A_r \}$ denote the minimal non-empty elements of the finite Boolean algebra of sets generated by the $B_i$'s.

To see the uniformity of $r$ in $\epsilon,$ for (iii) and (vi), first note that in (iii), by scaling it is sufficient to consider $\|H\|=1.$ If one assumes, as in Anderson's proof, that there is no upper bound on $r$, then one takes a sequence(either of $H_n=H_n^*, E(H_n)=0, \|H_n\|=1$ in the case of (iii) or of positives, $P_n \in \cl P[a,b]$ in (vi)) with corresponding $r$'s tending to infinity and gets a contradiction by considering the operator that is their direct sum. 

\end{proof}

The uniformity in the dependence of $r$ on $\epsilon$ is the main advantage of restricting to the smaller set of positive invertibles, $\cl P[a,b].$

\begin{remark} If  a single operator $H=H^*$ satisfies (ii) or (iii), then every pure
  state on the diagonal extends uniquely to $H,$ that is, $H \in  \cap_{\omega \in \beta \bb N} \cl U(s_{\omega}).$  Also, it can be shown
  that if a positive invertible $P$ has the property that $H= P -E(P)$
  satisfies either (ii) or (iii), then $P$ satisfies (iv). However, it
  is not clear that if a single $P$ satisfies (iv), then $P-E(P)$ satisfies
  (ii) or (iii).  For this reason, we believe that (iv) and (v) might
  be "easier" conditions to verify, if indeed, Kadison-Singer is true. See also the remark following \ref{u2}. It is important to note that because
  of the exponentiation trick in the heart of \ref{u1}, having the
  condition met for a single positive tells us nothing about
  uniqueness of extension for that single positive operator, unlike
  the situation for self-adjoints.
  \end{remark}
  
It is possible, but somewhat tedious, to give a direct "paving" proof that (v) implies (iii), so in this sense (v) might not lead to results that couldn't have been seen directly through "classical" paving arguments. To see how to accomplish this, one first starts with an arbitrary projection, $Q$ and sets $P= aQ + b(I-Q)$ and uses (v) to derive some paving estimates for $Q$. The argument then proceeds by using spectral projections for $H$.  

We now turn our attention to the results on paving upper triangular
matrices mentioned in the introduction.
To this end we call an operator $T=(t_{i,j}) \in B(\ell^2(\bb N))$
{\bf upper triangular} provided that $t_{i,j} =0$ for all $i >j$ and
we let $\cl T(\bb N)$ denote the unital subalgebra of upper triangular
operators.
We call an operator {\bf strictly upper triangular} if $t_{i,j}=0$ for
all $i \ge j,$ and let $\cl T_0(\bb N)$ denote the subalgebra of
strictly upper triangular. Note that $T \in \cl T_)(\bb N)$ if and
only if $T \in \cl T(\bb N)$ and $E(T)=0.$

It is well-known \cite{Ar} that $\cl T_0(\bb N) + \cl T_0(\bb N)^*$ is not dense
in $\{B \in B(\ell^2(\bb N)): E(B)=0 \},$ since triangular
truncation is unbounded. This fact makes the following
results somewhat surprising. 

\begin{thm} Let $\omega \in \beta \bb N,$ and let $s_{\omega}: \cl D
  \to \bb C$ be the *-homomorphism given by evaluation at $\omega.$
  Then the following are equivalent:
\begin{itemize}
\item[(i)] $s_{\omega}$ extends uniquely to a state on $B(\ell^2(\bb
  N)),$
\item[(ii)] for every $T \in \cl T_0(\bb N), \ell_{s_{\omega}}(T+T^*)=0,$
\item[(iii)] for each $T \in \cl T_0(\bb N),$ and each $\epsilon >0,$
  there exists $A \in \frak U(\omega),$ such that $-\epsilon P_A \le
  P_A(T+T^*)P_A \le + \epsilon P_A,$
\item[(iv)] for each $T \in \cl T_0(\bb N)$ and each $\epsilon >0,$
  there exists $A \in \frak U(\omega),$ such that $\|P_ATP_A\| <
  \epsilon.$
\end{itemize}
\end{thm}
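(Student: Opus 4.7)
The plan is to prove the cycle (i) $\Rightarrow$ (ii) $\Leftrightarrow$ (iii) $\Leftrightarrow$ (iv) $\Rightarrow$ (i), with the final implication carrying all of the real content. For (i) $\Rightarrow$ (ii): whenever $T \in \cl T_0(\bb N)$, the operator $H = T + T^*$ is self-adjoint with $E(H) = 0$, so condition (ii) of Theorem \ref{u3} immediately gives $\ell_{s_\omega}(H) = 0$. For (ii) $\Leftrightarrow$ (iii), I would apply (ii) to both $T$ and $-T$ to obtain $\ell_{s_\omega}(T+T^*) = 0 = u_{s_\omega}(T+T^*)$ and then invoke Theorem \ref{ui}; the reverse direction is immediate. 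For (iii) $\Leftrightarrow$ (iv) the key point is that $iT \in \cl T_0(\bb N)$ whenever $T$ is: applying (iii) to both $T$ and $iT$ and intersecting the two resulting sets inside the ultrafilter $\frak U(\omega)$ produces a single $A \in \frak U(\omega)$ simultaneously controlling $P_A(T+T^*)P_A$ and $P_A(T-T^*)P_A$, from which $\|P_ATP_A\| < \epsilon$ is immediate; the reverse follows from $\|P_A(T+T^*)P_A\| \le 2\|P_ATP_A\|$.

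The substance is in (iv) $\Rightarrow$ (i). By Theorem \ref{u3}(iv) it suffices to show $\ell_{s_\omega}(P)\ell_{s_\omega}(P^{-1}) \ge 1$ for every $P \in \cl P[a,b]$. For such $P$ I would invoke the triangular factorization $P = T^*T$ with $T \in \cl T(\bb N)$ and $T^{-1} \in \cl T(\bb N)$; this is the point at which the ``logmodular'' structure of the upper triangular algebra enters. Writing $T = D + N$ with $D = E(T)$ diagonal and $N \in \cl T_0(\bb N)$, and similarly $T^{-1} = D^{-1} + M$ (using that for upper triangular matrices $E(T^{-1}) = E(T)^{-1}$), direct expansion yields
\[P \ge D^*D + (X + X^*), \qquad P^{-1} \ge (D^*D)^{-1} + (Y + Y^*),\]
where $X = D^*N$ and $Y = M(D^{-1})^*$ both lie in $\cl T_0(\bb N)$ and the positive terms $N^*N$ and $MM^*$ have been discarded. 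Now apply hypothesis (iv) to $X$ and to $Y$, and refine once more inside $\frak U(\omega)$ to a set on which the diagonal scalars $D^*D$ and $(D^*D)^{-1}$ are within $\epsilon$ of their values at $\omega$. Intersecting all four sets produces a single $A \in \frak U(\omega)$ for which
\[P_A P P_A \ge (s_\omega(D^*D) - 3\epsilon)\, P_A, \qquad P_A P^{-1} P_A \ge (s_\omega((D^*D)^{-1}) - 3\epsilon)\, P_A.\]
Hence $\ell_{s_\omega}(P) \ge s_\omega(D^*D) - 3\epsilon$ and $\ell_{s_\omega}(P^{-1}) \ge s_\omega((D^*D)^{-1}) - 3\epsilon$. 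Finally, the Cauchy--Schwarz inequality for states used in the proof of Theorem \ref{u1}, applied to the restriction $s_\omega|_{\cl D}$, gives $s_\omega(D^*D)\cdot s_\omega((D^*D)^{-1}) \ge 1$; letting $\epsilon \to 0$ completes the proof.

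The main obstacle is the triangular factorization $P = T^*T$ with $T$ and $T^{-1}$ both bounded upper triangular operators: this is the nontrivial external input and is the non-commutative analogue of the outer factorization that underlies the logmodular framework of Blecher--Labuschagne mentioned in the introduction. Once this factorization is granted, the rest of the argument is a disciplined bookkeeping exercise in the calculus of $\ell_{s_\omega}$, together with the observation that the only off-diagonal information one needs to pave is carried by the single strictly upper triangular operators $X$ and $Y$ built from the factor $T$.
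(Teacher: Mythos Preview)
Your proof is correct, and the overall logical scaffolding matches the paper's: the ``easy'' equivalences among (ii), (iii), (iv) are handled the same way (via Theorem~\ref{ui} and the $T \mapsto iT$ trick with intersection inside the ultrafilter), and both you and the paper identify the logmodular factorization $P = T^*T$ with $T, T^{-1} \in \cl T(\bb N)$ as the external input.

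The genuine difference is in the direction carrying the content. The paper proves (ii) $\Rightarrow$ (i) directly at the level of states: from $\ell_{s_\omega}(T+T^*)=0$ for all $T\in\cl T_0(\bb N)$ one deduces that any state extension $s$ satisfies $s(T)=s_\omega(E(T))$ for every $T\in\cl T(\bb N)$; then for two extensions $s_1,s_2$ and $P=T^*T$ one applies Cauchy--Schwarz for states to get
\[
s_1(P)\,s_2(P^{-1}) \ge |s_1(T)|^2\,|s_2(T^{-1})|^2 = |s_\omega(E(T)E(T^{-1}))|^2 = 1,
\]
and Theorem~\ref{u1} finishes. You instead prove (iv) $\Rightarrow$ (i) by a paving-level computation: expand $P = D^*D + (X+X^*) + N^*N$ and $P^{-1} = (D^*D)^{-1} + (Y+Y^*) + MM^*$, pave away $X$ and $Y$ with hypothesis (iv), localize the diagonals, and read off lower bounds on $\ell_{s_\omega}(P)$ and $\ell_{s_\omega}(P^{-1})$. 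Both routes work; the paper's is shorter and highlights the Hoffman/Blecher--Labuschagne mechanism (uniqueness follows once the extensions agree on the \emph{algebra} $\cl T(\bb N)$), while yours stays closer to Anderson-style paving estimates and makes the role of the individual strictly upper triangular perturbations $X,Y$ explicit. A small remark: in your last line $s_\omega$ is multiplicative on $\cl D$, so $s_\omega(D^*D)\,s_\omega((D^*D)^{-1})=1$ exactly; no Cauchy--Schwarz is needed there.
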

\begin{proof} Clearly, (iv) implies (iii) implies (ii).

We now prove that (ii) implies (i). Let $s_1,s_2$ be two states on
$B(\ell^2(\bb N))$ that
extend $s_{\omega}.$ Assuming (ii), we have that $s_1(T+T^*)= s_1((iT)
+ (iT)^*) =0,$
for every $T \in \cl T_0(\bb N)$ and, hence, $s_1(T)=0.$ Similarly,
$s_2(T)=0$ for every $T \in \cl T_0(\bb N).$
Hence, for every $T \in \cl T(\bb N),$ we have that $s_1(T)=s_2(T) =
s_{\omega}(E(T)).$

Now let $P \in B(\ell^2(\bb N)),$ be positive and invertible. Then,
since the upper triangulars are a logmodular subalgebra of
$B(\ell^2(\bb N))$ \cite{Ar}, we
may factor $P=T^*T$ with $T \in \cl T(\bb N)$ and invertible.

Hence, $s_1(P)s_2(P^{-1}) = s_1(T^*T)s_2(T^{-1}T^{*-1}) \ge
|s_1(T)s_2(T^{-1})|^2 =
|s_{\omega}(E(T))s_{\omega}(E(T^{-1}))|^2
= |s_{\omega}(E(T)E(T^{-1}))|^2= |s_{\omega}(I)|^2= 1,$ where the last
equality follows since $E: \cl
T(\bb N) \to \cl D$ is a unital homomorphism.

Thus, by \ref{u1}, $s_{\omega}$ extends uniquely.

Finally, assuming (i), fix $\epsilon >0$ and let $T \in \cl T_0(\bb N).$ Applying the
equivalence of (i) and (v) in \ref{u3}, we get sets $A_1,A_2 \in \frak
U(\omega)$ such that $- \epsilon P_{A_1} \le P_{A_1}(T+T^*)P_{A_1} \le +\epsilon P_{A_1}$ and $- \epsilon P_{A_2} \le P_{A_2}((iT) + (iT)^*)P_{A_2} \le +\epsilon P_{A_2}.$ Hence, if we let $A= A_1 \cap A_2$, then $A \in \frak U(\omega)$
and $\|P_ATP_A \| \le \epsilon.$
\end{proof}

We now obtain the results of \cite{CT}.

\begin{cor} The following are equivalent:
\begin{itemize}
\item[(i)] the Kadison-Singer conjecture is true,
\item[(ii)] for each $T \in \cl T_0(\bb N)$ and each $\epsilon >0,$ there exists an $r$-paving $\{A_1,...,A_r \}$ of $\bb N$ with $- \epsilon P_{A_i} \le P_{A_i}(T+T^*)P_{A_i} \le + \epsilon P_{A_i},i=1,...,r,$
\item[(iii)] for each $T \in \cl T_0(\bb N)$ and each $\epsilon >0,$ there exists an $r$-paving $\{A_1,...A_r \}$ of $\bb N$ such that $\|P_{A_i}TP_{A_i} \| < \epsilon, i=1,...,r.$
\end{itemize} 
\end{cor}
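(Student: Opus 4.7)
The plan is to reduce this corollary to the previous pointwise theorem (on uniqueness of extensions of $s_\omega$) together with a compactness argument over $\beta \bb N$, in essentially the same spirit as the proof of \ref{ks}. First I observe that (iii) trivially implies (ii): if $\|P_{A_i}TP_{A_i}\| < \epsilon/2$, then $\|P_{A_i}(T+T^*)P_{A_i}\| < \epsilon$, and a self-adjoint operator of norm less than $\epsilon$ is sandwiched between $-\epsilon P_{A_i}$ and $+\epsilon P_{A_i}$. So the bulk of the work is (ii) $\Rightarrow$ (i) and (i) $\Rightarrow$ (iii).

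For (ii) $\Rightarrow$ (i), let $\omega \in \beta \bb N$ and $T \in \cl T_0(\bb N)$ and $\epsilon > 0$ be given. Apply (ii) to obtain an $r$-paving $\{A_1,\dots,A_r\}$. Since $\bb N = A_1 \cup \cdots \cup A_r$ is a finite disjoint union and $\frak U(\omega)$ is an ultrafilter, exactly one $A_i$ lies in $\frak U(\omega)$. The corresponding inequality $-\epsilon P_{A_i} \le P_{A_i}(T+T^*)P_{A_i} \le +\epsilon P_{A_i}$ is then precisely condition (iii) of the preceding theorem for this $\omega$, so $s_\omega$ extends uniquely. Since $\omega$ was arbitrary, the Kadison-Singer conjecture holds.

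For (i) $\Rightarrow$ (iii), fix $T \in \cl T_0(\bb N)$ and $\epsilon > 0$. By the preceding theorem (the equivalence of (i) and (iv) there), for each $\omega \in \beta \bb N$ there exists $A_\omega \in \frak U(\omega)$ with $\|P_{A_\omega}TP_{A_\omega}\| < \epsilon$. Each $A_\omega$ corresponds to a clopen neighborhood $U_\omega$ of $\omega$ in $\beta \bb N$, and these cover $\beta \bb N$. By compactness, finitely many $U_1,\dots,U_t$ suffice; set $B_i = U_i \cap \bb N$, so $\bb N = B_1 \cup \cdots \cup B_t$ and $\|P_{B_i}TP_{B_i}\| < \epsilon$ for each $i$. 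Finally, let $\{A_1,\dots,A_r\}$ be the atoms (nonempty minimal elements) of the finite Boolean algebra of subsets of $\bb N$ generated by the $B_i$. Each $A_j$ is contained in some $B_i$, so $P_{A_j} T P_{A_j} = P_{A_j}(P_{B_i} T P_{B_i})P_{A_j}$ and therefore $\|P_{A_j}TP_{A_j}\| < \epsilon$.

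The main obstacle is keeping the compactness step clean, but it is essentially already handled in the proof of \ref{ks}; the only new wrinkle here is that the estimate $\|P_{B_i}TP_{B_i}\| < \epsilon$ passes automatically to subprojections $P_{A_j} \le P_{B_i}$ because norm is monotone under compression. Note also that (i) $\Rightarrow$ (ii) then follows either from (i) $\Rightarrow$ (iii) via the trivial implication above, or directly from the analogous compactness argument applied to condition (iii) of the preceding theorem.
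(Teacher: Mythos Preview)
Your proof is correct and is exactly the argument the paper intends: the corollary is stated without proof precisely because it follows from the preceding pointwise theorem together with the compactness/ultrafilter argument already given in the proof of Theorem~\ref{ks}, and you have spelled out those steps carefully (including the observation that the norm estimate passes to subprojections when refining the cover to a partition).
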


\begin{cor} The following are equivalent:
\begin{itemize}
\item[(i)] for each $\epsilon >0$ there exists $r_1$ such that if $H=H^*$ and $E(H)=0$ then there exists an $r_1$-paving $\{A_1,..., A_{r_1} \}$ of $\bb N$ such that $\|P_{A_i}HP_{A_i}\| \le \epsilon \|H\|, i=1,...,r_1,$
\item[(ii)] for each $\epsilon >0$ there exists $r_2$ such that if $T \in \cl T_0(\bb N),$ then there exists an $r_2$-paving $\{A_1,...,A_{r_2} \}$ of $\bb N$ such that $\|P_{A_i}TP_{A_i}\| \le \epsilon \|T\|, i=1,...,r_2.$
\end{itemize}
\end{cor}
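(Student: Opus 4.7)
The plan is to prove both directions, relying almost entirely on earlier results in the paper. For (i) $\Rightarrow$ (ii), fix $T \in \cl T_0(\bb N)$ and $\epsilon > 0$. I would write $T = \frac{1}{2}(H_1 - i H_2)$ with $H_1 = T + T^*$ and $H_2 = i(T - T^*)$; both $H_j$ are self-adjoint, have $E(H_j) = 0$, and satisfy $\|H_j\| \le 2\|T\|$. Applying (i) to each $H_j$ with tolerance $\epsilon/2$ yields an $r_1(\epsilon/2)$-paving on which $\|P_{A_i} H_j P_{A_i}\| \le (\epsilon/2)\|H_j\| \le \epsilon\|T\|$. Taking the common refinement of the two pavings gives an $r_1(\epsilon/2)^2$-paving; the standard fact $\|P_A X P_A\| \le \|P_B X P_B\|$ whenever $A \subseteq B$ (because $P_A = P_A P_B = P_B P_A$) shows that each block of the refinement still paves both $H_j$ to within $\epsilon\|T\|$, and the triangle inequality applied to $T = \frac{1}{2}(H_1 - iH_2)$ then delivers the bound $\|P_{A} T P_{A}\| \le \epsilon\|T\|$. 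So $r_2(\epsilon) := r_1(\epsilon/2)^2$ works.

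For (ii) $\Rightarrow$ (i), I would bootstrap through the Kadison--Singer conjecture. First, (ii) trivially implies condition (iii) of the preceding corollary: given a nonzero $T \in \cl T_0(\bb N)$ and $\delta > 0$, apply (ii) with tolerance $\delta/(2\|T\|)$ to obtain an $r_2$-paving with $\|P_{A_i} T P_{A_i}\| \le \delta/2 < \delta$ (the case $T=0$ being trivial). The preceding corollary then implies that the Kadison--Singer conjecture holds. Invoking \ref{ks}, specifically the equivalence (i) $\iff$ (iii) proved there, immediately yields the uniform self-adjoint paving condition that is precisely (i) of the present corollary.

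The main obstacle is essentially bookkeeping rather than new ideas. In the first direction one must verify that the count $r_1(\epsilon/2)^2$ depends only on $\epsilon$, which it manifestly does since $r_1$ is a function of its tolerance alone by hypothesis (i); in the second direction the key observation is that the uniform integer promised by (i) of the corollary is furnished precisely by the uniform paving statement \ref{ks}(iii), so no extra work is required once Kadison--Singer is in hand. I expect the whole argument to be short: the nontrivial content lives in the earlier equivalences, and this corollary is best viewed as a uniform repackaging obtained by chaining them.
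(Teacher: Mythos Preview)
Your proof is correct and follows the same route the paper sketches in the remark immediately after the corollary: (i)$\Rightarrow$(ii) via the decomposition $T=\tfrac{1}{2}\bigl((T+T^*)-i\,((iT)+(iT)^*)\bigr)$ together with a common refinement of the two pavings, and (ii)$\Rightarrow$(i) by passing non-constructively through Kadison--Singer (the preceding corollary) and then invoking Theorem~\ref{ks}(iii). The only discrepancy is quantitative: the paper's remark asserts $r_2\le 2r_1$, whereas your common-refinement argument yields $r_2\le r_1(\epsilon/2)^2$; your bound is the one that actually follows from the stated argument, and no sharper bound is proved in the paper.
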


Because of the non-constructive nature of our earlier proofs, there is no clear bound on $r_1$ in terms of $r_2.$ Clearly, $r_2 \le 2r_1,$ by considering $T+T^*$ and $(iT)+(iT)^*$ as above.

Analogous results apply to Toeplitz operators. Recall that if $f \in L^{\infty}(\bb T),$ where $\bb T$ denotes the unit circle in the complex plane and we set $\hat{f}(n) = \frac{1}{2\pi}\int_0^1 f(e^{it}) e^{-2\pi int} dt$, then by the {\bf Toeplitz operator with symbol f} we mean the operator, $T_f \in B(\ell^2(\bb N))$ whose matrix is given by $T_f= (\hat{f}(i-j)).$
We identify $H^{\infty}(\bb D) = H^{\infty}(\bb T)$ with the subspace
of $L^{\infty}(\bb T)$ such that $\hat{f}(n)=0$ for all $n<0$ and
$H^{\infty}_0(\bb T)$ with those functions such that $\hat{f}(n) =0$
for all $n \le 0.$ As with operators, we have that $H^{\infty}(\bb T)
+ \overline{H^{\infty}(\bb T)}$ is not dense in $L^{\infty}(\bb T),$
indeed, the result for operators follows from this fact.
By a classic factorization theorem\cite{KH2}, every positive invertible function $p \in L^{\infty}(\bb T)$ is of the form $p= |f|^2$ for some invertible $f \in H^{\infty}(\bb T)$ and hence, $T_p = T_f^*T_f$ with $T_f$ invertible.

\begin{thm} The following are equivalent:
\begin{itemize}
\item[(i)] for each $\epsilon >0$ there exists $r_1$ such that if $T_h$ is a self-adjoint Toeplitz operator with $\hat{h}(0)=0,$ then there is an $r_1$-paving $\{A_1,...,A_r\}$ of $\bb N$ such that $\|P_{A_i}T_hP_{A_i} \| \le \epsilon \|T_h\|, i=1,...,r_1$
\item[(ii)] for each $\epsilon >0$ there exists $r_2$ such that if $T_f$ is a Toeplitz operator with $f \in H^{\infty}_0(\bb T),$ then there is an $r_2$-paving $\{A_1,...,A_{r_2} \}$ of $\bb N$ such that $\|P_{A_i}T_fP_{A_i}\| \le \epsilon \|T_f\|, i=1,...,r_2.$
\end{itemize}
\end{thm}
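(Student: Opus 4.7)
My plan is to follow the template of the unnumbered theorem on the strictly upper-triangular algebra above: extract uniqueness of the state extension of $s_{\omega}$ from the paving hypothesis, now using the classical factorization $g=|f|^2$ of positive invertible $L^{\infty}$ functions in place of the logmodular factorization of positive invertible operators.

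The easy direction (i)$\Rightarrow$(ii). For $T_f$ with $f\in H^{\infty}_0$, both $T_f+T_f^*=T_{f+\bar f}$ and $i(T_f-T_f^*)=T_{i(f-\bar f)}$ are self-adjoint Toeplitz operators whose symbols have vanishing $0$th Fourier coefficient and whose operator norms are at most $2\|T_f\|$. Applying (i) to each with $\epsilon/2$ and intersecting the two resulting pavings produces an $r_1(\epsilon/2)^2$-paving on which $\|P_A T_f P_A\|\le 2\epsilon\|T_f\|$.

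The substantive direction is (ii)$\Rightarrow$(i). Given $\omega\in\beta\bb N$, (ii) forces some element $A$ of the paving of each $T_f$ to lie in $\frak U(\omega)$, so for every state $s$ on $B(\ell^2(\bb N))$ extending $s_{\omega}$ and every $f\in H^{\infty}_0$, $|s(T_f)|=|s(P_A T_f P_A)|\le\epsilon\|T_f\|$; letting $\epsilon\to 0$ gives $s(T_f)=0$. Now using the factorization $g=|f|^2$ (with $f\in H^{\infty}$ outer and invertible) cited in the paper, $T_g=T_f^*T_f$ and the operator inverse is $T_g^{-1}=T_{f^{-1}}(T_{f^{-1}})^*$. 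For any two extensions $s_1,s_2$ of $s_{\omega}$, since $f-f(0)\in H^{\infty}_0$ and likewise $f^{-1}-f^{-1}(0)\in H^{\infty}_0$, we have $s_1(T_f)=f(0)$ and $s_2(T_{f^{-1}})=1/f(0)$, so Cauchy-Schwarz for states yields
\[ s_1(T_g)\,s_2(T_g^{-1}) \ge |s_1(T_f)|^2|s_2(T_{f^{-1}})|^2 = 1. \]
Running the exponential trick of Theorem \ref{u1} with $g_t=e^{th}$ for real $h\in L^{\infty}$ (so that $t\mapsto T_{g_t}$ is operator-norm differentiable at $0$ with derivative $T_h$) then forces $s_1(T_h)=s_2(T_h)$ for every self-adjoint Toeplitz operator, hence by linearity for every Toeplitz operator. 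Since $s_{\omega}\circ E$ is itself an extension that sends $T_h$ to $\hat h(0)$, every extension $s$ has $s(T_h)=\hat h(0)$.

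For $T_h$ self-adjoint with $\hat h(0)=0$ this forces $s(T_h)=0$ for every extension, so Theorem \ref{ui} supplies, for each $\epsilon>0$, an $A\in\frak U(\omega)$ with $-\epsilon P_A\le P_A T_h P_A\le \epsilon P_A$. Compactness of $\beta\bb N$ then upgrades this to a finite $r$-paving for each fixed $T_h$, as in the proof of Theorem \ref{ks}. The main obstacle I expect is extracting a uniform $r_1=r_1(\epsilon)$ independent of $T_h$: the direct-sum contradiction argument used in Theorem \ref{ks} does not transfer, because a direct sum of Toeplitz operators is not Toeplitz, and the paper already acknowledges the analogous difficulty in the preceding unnumbered corollary (``there is no clear bound on $r_1$ in terms of $r_2$''). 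I expect the uniform bound to come either non-constructively via a compactness argument, or by exploiting weak-$*$ compactness of the unit ball of real mean-zero symbols in $L^{\infty}(\bb T)$ together with the continuous dependence of finite sub-blocks $P_A T_h P_A$ on the Fourier coefficients of $h$.
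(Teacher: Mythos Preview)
Your route for (ii)$\Rightarrow$(i) is essentially the paper's, with one unnecessary detour. The paper pushes the whole computation into the commutative C*-algebra $L^{\infty}(\bb T)$: setting $\rho_i(g)=s_i(T_g)$ gives two states on $L^{\infty}$ which, by your argument, agree with $f\mapsto\hat f(0)$ on all of $H^{\infty}$; then for $p=|f|^2$ positive and invertible in $L^{\infty}$ one has $\rho_1(p)\rho_2(p^{-1})\ge|\rho_1(f)|^2|\rho_2(f^{-1})|^2=1$ by Cauchy--Schwarz for the states $\rho_i$, and Theorem~\ref{u1} applied to $\cl B=L^{\infty}(\bb T)$ yields $\rho_1=\rho_2$ directly. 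This sidesteps the operator inverse $T_g^{-1}$ (which is not Toeplitz) and avoids redoing the exponential differentiation by hand. Your version is correct, just slightly more laborious.

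On the uniformity of $r_1$: you are right that the direct-sum contradiction is unavailable for Toeplitz operators, and the paper does not supply an alternative argument---its proof simply ends with the assertion that unique extension of every $s_{\omega}$ to the Toeplitz operators ``is equivalent to this family of operators being uniformly pavable.'' So on this point your proposal is exactly as complete as the paper's own proof; the step is asserted, not justified, in both. One small correction to your analogy, however: in the preceding corollary, condition (i) concerns \emph{all} self-adjoint zero-diagonal operators, a class that \emph{is} closed under direct sums, so the uniform $r_1$ there is obtained via Theorem~\ref{ks}(iii); the paper's remark about ``no clear bound on $r_1$ in terms of $r_2$'' refers only to the lack of an explicit formula, not to the existence of a uniform bound. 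The Toeplitz situation is genuinely different in this respect.
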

\begin{proof}  Applying the first condition to $T_{Re(f)}$ and $T_{Im(f)}$, clearly yields the second condition with $r_2 \le 2r_1.$

Conversely, the second condition is equivalent to every $s_{\omega}$ extending uniquely to the Toeplitz operators with symbol in $H^{\infty}_0(\bb T).$  Now fix $\omega$ and let $s_i: B(\ell^2(\bb N)) \to \bb C, i=1,2$ be two states that extend $s_{\omega}.$ Then  $\rho_i: L^{\infty}(\bb T) \to \bb C,i=1,2$ defined by $\rho_i(f) = s_i(T_f), i=1,2$ are states on the C*-algebra $L^{\infty}(\bb T)$ that are both the homomorphism, $\rho_1(f)=\rho_2(f) = \hat{f}(0)$ on $H^{\infty}(\bb T).$ Thus, by either applying the theorem of \cite{BL} on uniqueness of extensions of boundary representations of logmodular algebras or observing that $\rho_1(p)\rho_2(p^{-1}) \ge 1$ and applying \ref{u1}, we obtain that $\rho_1= \rho_2$ and hence every state $s_{\omega}$ extends uniquely to the Toeplitz operators. But this is equivalent to this family of operators being uniformly pavable.
\end{proof}

\end{document}